\setlist{noitemsep}
\setlist[enumerate,1]{label=\textnormal{(\roman*)}}
\newtheorem{theorem}{Theorem}[section]
\newtheorem{proposition}[theorem]{Proposition}
\newtheorem{lemma}[theorem]{Lemma}
\theoremstyle{definition}
\newtheorem{definition}[theorem]{Definition}
\theoremstyle{remark}
\newtheorem{remark}[theorem]{Remark}
\newtheorem{example}[theorem]{Example}
\numberwithin{equation}{section}
\newcommand{\Lie}[1]{\mathrm{#1}}
\newcommand{\lie}[1]{\operatorname{\mathfrak{#1}}}
\newcommand{\SO}{\Lie{SO}}
\newcommand{\Un}{\Lie{U}}
\newcommand{\so}{\lie{so}}
\newcommand{\un}{\lie{u}}
\newcommand{\colvector}[1]{#1}
\newcommand{\va}{\colvector{a}}
\newcommand{\vb}{\colvector{b}}
\newcommand{\vc}{\colvector{c}}
\newcommand{\vd}{\colvector{d}}
\newcommand{\ve}{\colvector{e}}
\newcommand{\vf}{\colvector{f}}
\newcommand{\vg}{\colvector{g}}
\newcommand{\vh}{\colvector{h}}
\newcommand{\vk}{\colvector{k}}
\newcommand{\vl}{\colvector{\ell}}
\newcommand{\vm}{\colvector{m}}
\newcommand{\vp}{\colvector{p}}
\newcommand{\vq}{\colvector{q}}
\newcommand{\vu}{\colvector{u}}
\newcommand{\vv}{\colvector{v}}
\newcommand{\vA}{\colvector{A}}
\newcommand{\vB}{\colvector{B}}
\newcommand{\vP}{\colvector{P}}
\newcommand{\vQ}{\colvector{Q}}
\newcommand{\hook}{\mathbin{\lrcorner}}
\newcommand{\bR}{\mathbb{R}}
\newcommand{\bC}{\mathbb{C}}
\newcommand{\GG}{\mathbf{G}}
\newcommand{\ii}{\mathbf{i}}
\newcommand{\CH}{\bC H}
\newcommand{\Mu}{\mathrm{M}}
\newcommand{\LC}{{\mathrm{LC}}}
\newcommand{\LCS}{{\mathrm{LC}(S)}}
\newcommand{\id}{\mathop{}\!\mathrm{id}}
\newcommand{\any}{\,\cdot\,}
\newcommand{\tallmstrut}{\rule{0pt}{2.7ex}}
\newcommand{\eqbreak}[1][2]{\\&\hspace{#1em}}
\newcommand{\eqand}[1][1]{\hspace{#1em}\text{and}\hspace{#1em}}
\begin{document}

\title{\bfseries The c-map on groups}

\author{Oscar Macia and Andrew Swann}

\date{}
\maketitle

\begin{abstract}
  We study the projective special Kähler condition on groups,
  providing an intrinsic definition of homogeneous projective special
  Kähler that includes the previously known examples.
  We give intrinsic defining equations that may be used without
  resorting to computations in the special cone, and emphasise certain
  associated integrability equations.
  The definition is shown to have the property that the image of such
  structures under the c-map is necessarily a left-invariant
  quaternionic Kähler structure on a Lie group.
\end{abstract}

\section{Introduction}

In the search for manifolds with special, or even exceptional,
holonomy developments in theoretical physics have provided a fruitful
ground for examples.
In particular, the study of T-duality between type IIA and type IIB
superstring theories from the point of view of the low energy
effective Lagrangians for \( D=4 \), \( N=2 \) supergravity has given
insight in the relation between Kähler geometry and hyperKähler or
quaternionic Kähler geometry, through a mechanism known as the c-map
originally introduced in Cecotti, Ferrara,
Girardello~\cite{Cecotti-FG:II}, Ferrara and Sabharwal~\cite{Ferrara-S:q}.

For supersymmetric field theories without gravity, supersymmetry is
regarded as a global symmetry and the moduli space of scalar fields of
vector multiplets is (affine) special Kähler
\cite{Craps-RTvP:what-sK,%
Freed:special,Lledo-MvPV:special,Strominger:special}, while the
geometry of the moduli space of scalar fields in the hypermultiplets
is of hyperKähler type.
When supersymmetry is imposed as a local symmetry, thus in the context
of supergravity, the geometries of the above moduli spaces of scalar
fields become projective special Kähler and quaternionic Kähler,
respectively, see~\cite{Alvarez-Gaume-F:geometrical,%
Bagger-W:couplings,Castellani-DF:sp-wo,dAuria-FF:special,%
DeWit-vP:potentials}.
The quaternionic Kähler nature of the hypermultiplet metric was first
described in Ferrara and Sabharwal~\cite{Ferrara-S:q}.
In this context the ``rigid c-map'' associates to every special Kähler
manifold of complex dimension \( n \) a dual hyperKähler manifold of
quaternionic dimension~\( n \), and the ``local c-map'' associates a
quaternionic Kähler manifold of quaternionic dimension \( n+1 \) to
each projective special Kähler manifold of complex dimension~\( n \).

Although the (local) c-map has its origins in supergravity, it has
substantial mathematical interest through the work of de Wit and
Van~Proeyen~\cite{DeWit-vP:special} where it was used to correct
Alekseevsky's classification~\cite{Alekseevsky:solvable} of
quaternionic Kähler manifolds admitting a transitive completely
solvable group of isometries.
Recently, mathematical descriptions of the c-map in general have been
given in~\cite{Alekseevsky-CM:conification} and~\cite{Macia-S:c-map}.
The former shows that the local formulas derived by Ferrara \&
Sabharwal~\cite{Ferrara-S:q} are indeed obtainable by appropriate
conification procedures; the latter provides a geometric approach to
the global geometry of the c-map via the twist construction and
elementary deformations.

The first applications of the c-map were to group manifolds, and
papers such as
\cite{Cecotti-FG:II,DeWit-vP:special,DeWit-VvP:symmetry} provide
several tables of resulting structures.
However, the precise mathematical motivation for the classes of
examples covered remains unclear, and from a mathematical point of
view assumptions derived from supergravity may not necessarily be
relevant for the mathematical applications.
Indeed all groups obtained are completely solvable, but it is an open
conjecture of Alekseevsky whether all homogeneous quaternionic Kähler
metrics of negative scalar curvature are left-invariant structures on
completely solvable groups.

The purpose of the current paper is to provide a first step towards
understanding what constraints the geometric c-map
in~\cite{Macia-S:c-map} may impose.
The initial data is a group manifold carrying an invariant projective
special Kähler structure.
However, traditionally the definition of projective special Kähler
\cite{Freed:special} is specified via the geometry of an auxiliary
cone, rather than intrinsically, and is not immediately clear which
structures should be regarded as homogeneous.
We thus start with a left-invariant Kähler structure on a Lie
group~\( S \), and work through the conditions that this admits a
projective special Kähler structure.
In the first instance we pass to the cone~\( C \) and study the
standard equations there.
We find a certain of integrability conditions enable one to quickly
get certain results about projective special Kähler manifolds that are
Kähler products.
Thereafter we show that the assumption that the Kähler form of \( S \)
is exact ensures the defining objects descend well from \( C \)
to~\( S \), and give us both a reasonable definition of homogeneous
structure and an intrinsic formulation of the projective special
Kähler condition directly on~\( S \).
We illustrate how these equations and the associated integrability
conditions may be used in a four-dimensional example.
Finally, we demonstrate the reasonableness of our definition by
proving that the c-map applied to a homogeneous structure on a group
always yields a group manifold with left-invariant quaternionic Kähler
structure.

While the focus of this paper is on group manifolds, its worth noting
that mathematically the c-map on inhomogeneous data is known to
construct previously unknown complete inhomogeneous quaternionic
Kähler~\cite{Cortes-DJL:cohom-1,Cortes-DL:psr,%
Cortes-DS:completeness-survey}.

As we were finalising this manuscript, Mauro Mantegazza kindly sent us
a copy of~\cite{Mantegazza:psK}.
There he obtains the intrinsic equations for projective special Kähler
manifolds in general, even when the Kähler form is not exact, and
provides various global results.
That paper also uses the characterisation to show that the homogeneous
examples in real dimension four are exactly the two cases we
consider in this paper, and in particular the exactness condition is
necessarily satisfied.

\section{The special Kähler conditions}
\label{sec:sK-cond}

Projective special Kähler manifolds~\( S \) are best defined and
understood via their cones~\( C \), cf.~\cite{Freed:special}.
In this section, we will start with a left-invariant Kähler structure
on a group manifold~\( S \) and use the associated cone to derive the
relevant equations in a left-invariant frame.
This will follow the general picture described
in~\cite{Macia-S:c-map}.

Suppose \( S \) is a Lie group with Lie algebra~\( \lie{s} \), and
that this Lie group carries a left-invariant Kähler structure with
complex structure~\( J \), metric~\( g_{S} \) and Kähler
form~\( \omega_{S} = g_{S}(J\any,\any) \).
Choose an orthonormal basis
\( \{X_{1},\dots,X_{n},JX_{1},\) \(\dots, JX_{n}\} \)
for~\( \mathfrak{s} \), and write \( A_{i} = X_{i} \),
\( B_{i} = JX_{i} \), for \( i = 1,\dots,n \).
Denote by \( \{a^{i},b^{i}: 1\leqslant i\leqslant n\} \) the
corresponding dual basis of left-invariant one-forms.
The complex structure acts on \( \mathfrak{s}^{*} \) with
\( Ja^{i} = b^{i} \).

In what follows it will be often useful to resort to matrix notation.
Therefore we introduce \( \bR^{n} \)-valued one-forms
\( \va = (a^{i}) \), \( \vb = (b^{i}) \), and the
\( \bR^{2n} \)-valued coframe \( \theta = (\va,\vb) \).
The metric and Kähler forms are
\begin{equation*}
  g_{S} = \theta^{T}\theta = \va^{T}\va +\vb^{T}\vb
  \eqand
  \omega_{S} = \frac{1}{2} \theta^{T}\wedge J\theta
  = \va^{T}\wedge\vb.
\end{equation*}
The connection one-form \( \omega_{\LCS} \) of the Levi-Civita
connection of~\( g_{S} \) is the skew-symmetric matrix determined by
the structural equations
\begin{equation*}
  d\theta = -\omega_{\LCS}\wedge\theta,
\end{equation*}
As the structure is Kähler, we have that \( \omega_{\LCS} \) takes
values in \( \un(n) \leqslant \so(2n) \), so we may write
\begin{equation*}
  \omega_{\LCS} =
  \begin{pmatrix}
    \mu&\lambda \\
    -\lambda&\mu
  \end{pmatrix}
  ,
\end{equation*}
with \( \mu = (\mu^{i}_{j}) \), \( \lambda = (\lambda^{i}_{j}) \)
\( n\times n \)-matrices of one-forms satisfying the following
symmetries
\begin{equation*}
  \mu^{T} = -\mu,\quad \lambda^{T} = \lambda.
\end{equation*}
The structural equation is thus
\begin{equation}
  \label{eq:structural}
  d
  \begin{pmatrix}
    \va \\
    \vb
  \end{pmatrix}
  = -
  \begin{pmatrix}
    \mu & \lambda \\
    -\lambda & \mu
  \end{pmatrix}
  \wedge
  \begin{pmatrix}
    \va \\
    \vb
  \end{pmatrix}
  .
\end{equation}
The curvature of~\( S \) now has the form
\begin{equation*}
  \Omega_{S}
  = d\omega_{\LCS} + \omega_{\LCS}\wedge \omega_{\LCS}
  =
  \begin{pmatrix}
    \Mu & \Lambda \\
    -\Lambda & \Mu
  \end{pmatrix}
  ,
\end{equation*}
where
\begin{equation}
  \label{eq:Mu-Lambda}
  \Mu = d\mu + {\mu}\wedge{\mu} - {\lambda}\wedge{\lambda}
  \eqand
  \Lambda= d \lambda + \mu\wedge\lambda + \lambda\wedge\mu.
\end{equation}

To introduce the projective special Kähler conditions, we need to
assume that \( (S,2\omega_{S}) \) is Hodge, meaning that there is a
circle bundle \( \pi\colon C_{0}\to S \) with connection one-form
\( \varphi \) such that
\begin{equation*}
  d\varphi = 2\pi^{*}\omega_{S}.
\end{equation*}
We write \( \pi^{*}\omega_{S} = \tilde{\va}^{T}\wedge\tilde{\vb} \),
where \( \tilde{\va} = \pi^{*}\va \), \( \tilde{\vb} = \pi^{*}\vb \),
and let \( X \) be the vector field generating the circle action in
the fibres.
In particular, \( \varphi(X) = 1 \), \( L_{X}\varphi = 0 \), and
\( \pi_{*}X = 0 \).
Then, the (complex) cone \( C \) over \( S \) is defined to be
\begin{equation*}
  C = \bR_{>0}\times C_{0}.
\end{equation*}

Let \( t \) be the standard coordinate on \( \bR_{>0} \) and put
\( \hat{\va} = t\tilde{\va} \), \( \hat{\vb} = t\tilde{\vb} \),
\( \hat{\varphi} = t\varphi \), \( \hat{\psi} = dt \).
Then \( C \) carries a pseudo-Kähler structure with metric and Kähler
form given by
\begin{equation*}
  g_{C} = \hat{\va}^{T}\hat{\va} + \hat{\vb}^{T}\hat{\vb} - \hat{\varphi}^{2} -
  \hat{\psi}^{2},\qquad
  \omega_{C} = \hat{\va}^{T}\wedge\hat{\vb}-\hat{\varphi}\wedge\hat{\psi}.
\end{equation*}
We denote its complex structure by \( J \) and note that the conic
symmetry \( X \) satisfies \( JX = t\partial_{t} \) and
\( g_{C}(X,X) = -t^{2} \).

Then \( (\hat{\va},\hat{\vb},\hat{\varphi},\hat{\psi}) \) is a unitary
coframe for~\( C \) and we put
\( \theta_{C} = (\hat{\va},\hat{\vb},\hat{\varphi},\hat{\psi})^{T} \).
The Levi-Civita connection of \( C \) is uniquely determined by
\( d \theta_{C} = -\omega_{\LC}\wedge \theta_{C} \) together with
\( \omega_{\LC}^{T}\GG + \GG\omega_{\LC}^{} = 0 \) and
\( \ii\omega_{\LC} = \omega_{\LC}\ii \), where
\begin{equation*}
  \GG =
  \begin{pmatrix}
    \id_{2n}&0 \\
    0 & -\id_{2}
  \end{pmatrix}
  \eqand
  \ii =
  \begin{pmatrix}
    \ii_{2n} & 0 \\
    0 & \ii_{2}
  \end{pmatrix}
  =
  \begin{pmatrix}
    0 & \id_{n} & 0 & 0\\
    -\id_{n} & 0 & 0 & 0 \\
    0 & 0 & 0 & 1\\
    0 & 0 & -1 & 0
  \end{pmatrix}.
\end{equation*}
Using
\begin{gather*}
  d\hat{\psi} = 0,\quad d\hat{\varphi} =
  \frac{1}{t}(\hat{\psi}\wedge\hat{\varphi} +
  2\hat{\va}^{T}\wedge\hat{\vb}),\\[1ex]
  d
  \begin{pmatrix}
    \hat{\va}\\
    \hat{\vb}
  \end{pmatrix}
  = \frac{1}{t}
  \begin{pmatrix}
    \hat{\psi}\id_{n}-\hat{\mu}&-\hat{\lambda}\\
    \hat{\lambda}&\hat{\psi}\id_{n}-\hat{\mu}
  \end{pmatrix}
  \wedge
  \begin{pmatrix}
    \hat{\va}\\
    \hat{\vb}
  \end{pmatrix},
\end{gather*}
one checks that
\begin{equation*}
  \omega_{\LC} =
  \begin{pmatrix}
    \tilde{\mu} & \varphi \id_{n} + \tilde{\lambda} & \tilde{\vb}
    & \tilde{\va}\\
    -\varphi \id_{n}-\tilde{\lambda} & \tilde{\mu} &-\tilde{\va}
    &\tilde{\vb} \\
    \tallmstrut \tilde{\vb}^{T} & -\tilde{\va}^{T} & 0
    & \varphi\\
    \tallmstrut \tilde{\va}^{T} & \tilde{\vb}^{T} & -\varphi & 0
  \end{pmatrix}
  =
  \begin{pmatrix}
    \varphi \ii_{2n}+\tilde{\omega}_{\LCS} &
    \begin{matrix}
      \ii_{2n}\tilde{\theta}_{S} & \tilde{\theta}_{S}
    \end{matrix}\\
    \begin{matrix}
      \tallmstrut -{\tilde{\theta}_{S}}^{T}\ii_{2n} \\
      \tallmstrut \tilde{\theta}_{S}^{T}
    \end{matrix}
    & \varphi \ii_{2}
  \end{pmatrix}
  .
\end{equation*}

The conditions that \( (S,g_{S},\omega_{S}) \) be \emph{projective
special Kähler} are that its cone \( (C,g_{C},\omega_{C},X) \) is
special Kähler with \( X \) a conic symmetry.
More precisely this means that \( C \) admits a torsion-free flat
symplectic connection~\( \nabla \) with
\( (\nabla_{A}J)B = (\nabla_{B}J)A \), for all \( A,B \), and such
that the symmetry~\( X \) satisfies \( \nabla X = -J \).
Note that our construction of \( C \) already ensures that
\( (g_{C},\omega_{C},J) \) is pseudo-Kähler, that \( X \) is non-null
and that \( \nabla^{\LC} X = -J \).

The conditions on \( \nabla \) were carefully analysed
in~\cite{Macia-S:c-map}, summarised there in the proof of
Proposition~6.3, giving the following: writing the connection one-form
for \( \nabla \) in the coframe~\( \theta_{C} \) as
\( \omega_{\nabla} \), the conic special Kähler conditions on the
pseudo-Kähler manifold~\( C \) are equivalent to the existence of a
matrix-valued one-form~\( \eta = \omega_{\nabla} - \omega_{\LC} \)
such that
\begin{enumerate}
\item\label{item:flat}
  \( \Omega_{\nabla} = d\omega_{\nabla} +
  \omega_{\nabla}\wedge\omega_{\nabla} = 0 \) (flat),
\item\label{item:torsion-free} \( \eta\wedge\theta = 0 \)
  (torsion-free),
\item\label{item:special-symplectic-i}
  \( \ii_{2n+2}\eta = -\eta\ii_{2n+2} \) (special symplectic),
\item\label{item:special-symplectic-G} \( \eta^{T}\GG = -\GG\eta \)
  (special symplectic),
\item\label{item:X-N-LC} \( X \hook \eta = 0 \) (conic),
\item\label{item:X-conic} \( JX \hook \eta = 0 \) (conic).
\end{enumerate}

\begin{lemma}
  \label{lem:eta}
  For \( S \) of real dimension \( 2n \), the difference
  \( \eta = \omega_{\nabla} - \omega_{\LC} \)of the special and
  Levi-Civita connections on~\( C \) is given by a matrix of one forms
  with the following structure:
  \begin{equation*}
    \begin{split}
      \eta =
      \begin{pmatrix}
        \vu&\vv&0&0\\
        \vv&-\vu&0&0\\
        0&0&0&0\\
        0&0&0&0
      \end{pmatrix}
           &\in C^{\infty}(M) \otimes \pi^{*}\Omega^{1}(S,M_{2n+2}(\bR))\\
           &\subset \Omega^{1}(C,M_{2n+2}(\bR)),
    \end{split}
  \end{equation*}
  where \( \vu \) and~\( \vv \) take values in symmetric
  \( n\times n \) matrices and satisfy
  \begin{equation}
    \label{eq:torsion-free}
    \vu \wedge \hat{\va} + \vv \wedge \hat{\vb}
    = 0
    = \vv \wedge \hat{\va} - \vu \wedge \hat{\vb}.
  \end{equation}
\end{lemma}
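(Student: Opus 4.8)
The plan is to read off the structure of \( \eta \) one condition at a time, working with the splitting of the coframe \( \theta_{C} = (\hat{\va},\hat{\vb},\hat{\varphi},\hat{\psi})^{T} \) into its ``horizontal'' part \( (\hat{\va},\hat{\vb}) \) and its ``vertical'' part \( (\hat{\varphi},\hat{\psi}) \). Accordingly I write \( \eta \) in block form
\[
  \eta = \begin{pmatrix} P & Q \\ R & T \end{pmatrix},
\]
with \( P \) an \( M_{2n}(\bR) \)-valued one-form, \( T \) an \( M_{2}(\bR) \)-valued one-form, and \( Q,R \) of the complementary sizes. Conditions~\ref{item:special-symplectic-i} and~\ref{item:special-symplectic-G} are pointwise algebraic, so I apply them first. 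Writing \( \ii = \mathrm{diag}(\ii_{2n},\ii_{2}) \) and \( \GG = \mathrm{diag}(\id_{2n},-\id_{2}) \) and comparing blocks, condition~\ref{item:special-symplectic-i} forces \( P \) to anticommute with \( \ii_{2n} \), which puts it into the form \( \left(\begin{smallmatrix} \vu & \vv \\ \vv & -\vu \end{smallmatrix}\right) \), and likewise constrains the shapes of \( T \), \( Q \) and \( R \); condition~\ref{item:special-symplectic-G} then forces \( \vu \) and \( \vv \) to be symmetric and relates \( Q \) and \( R \) by transposition. This already yields the claimed shape of the top-left block, and the only remaining task is to kill \( Q,R,T \) and to extract the wedge relations.

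Next I would use the conic conditions~\ref{item:X-N-LC} and~\ref{item:X-conic}. The key observation is that among the coframe one-forms only \( \hat{\varphi} \) pairs nontrivially with \( X \), and only \( \hat{\psi} \) with \( JX \): since \( \pi_{*}X = 0 \) and \( \hat{\va} = t\,\pi^{*}\va \), \( \hat{\vb} = t\,\pi^{*}\vb \) are horizontal, we have \( \hat{\va}(X) = \hat{\vb}(X) = 0 \), while \( \hat{\varphi}(X) = t\,\varphi(X) = t \) and \( \hat{\psi}(X) = dt(X) = 0 \); dually, using \( JX = t\partial_{t} \), only \( \hat{\psi}(JX) = dt(t\partial_{t}) = t \) survives. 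Hence \( X\hook\eta = 0 \) says every entry of \( \eta \) has vanishing \( \hat{\varphi} \)-component, and \( JX\hook\eta = 0 \) the same for the \( \hat{\psi} \)-component. Therefore every entry of \( \eta \) is a combination of \( \hat{\va}^{i},\hat{\vb}^{i} \) alone; equivalently \( \eta \) lies in the pullback submodule \( \pi^{*}\Omega^{1}(S) \) with smooth coefficients on \( C \), which is the basicness asserted in the lemma.

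The decisive step is the torsion-free condition~\ref{item:torsion-free}, \( \eta\wedge\theta_{C} = 0 \). Having arranged that all entries of \( \eta \) are horizontal, I split \( \eta\wedge\theta_{C} \) by type. In the top block \( P\wedge(\hat{\va},\hat{\vb})^{T} \) is of horizontal\(\wedge\)horizontal type whereas \( Q\wedge(\hat{\varphi},\hat{\psi})^{T} \) is of horizontal\(\wedge\)vertical type; as these two types are linearly independent, each vanishes separately. The vanishing of \( Q\wedge(\hat{\varphi},\hat{\psi})^{T} \) forces \( Q = 0 \), since a horizontal one-form wedged with \( \hat{\varphi} \) or with \( \hat{\psi} \) can vanish only if the one-form does; the same argument on the bottom block gives \( T = 0 \), and then \( R = 0 \) follows from \( Q=0 \) via~\ref{item:special-symplectic-G}. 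What survives is \( P\wedge(\hat{\va},\hat{\vb})^{T} = 0 \), and expanding the product \( \left(\begin{smallmatrix}\vu & \vv \\ \vv & -\vu\end{smallmatrix}\right)\wedge\left(\begin{smallmatrix}\hat{\va}\\\hat{\vb}\end{smallmatrix}\right) \) gives precisely~\eqref{eq:torsion-free}.

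I expect the main obstacle to be the bookkeeping in this last step: one must be sure that the horizontal\(\wedge\)vertical two-forms produced by \( Q \) and \( T \) cannot be cancelled by anything else, which is exactly where the prior reduction to horizontal entries (from the conic conditions) is essential, since it guarantees that \( P \) and \( R \) contribute only horizontal\(\wedge\)horizontal terms. Finally, I note that the flatness condition~\ref{item:flat} is not used anywhere above: the algebraic shape of \( \eta \) is determined entirely by the remaining five conditions, while~\ref{item:flat} would impose further differential constraints on \( \vu \) and \( \vv \).
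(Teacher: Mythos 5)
Your proof is correct and takes essentially the same route as the paper's: the algebraic special-symplectic conditions~\ref{item:special-symplectic-i} and~\ref{item:special-symplectic-G} pin down the block shape of \( \eta \), the conic conditions~\ref{item:X-N-LC} and~\ref{item:X-conic} force every entry to be a combination of the \( \hat{\va},\hat{\vb} \) components only, and a type decomposition of the torsion-free condition then kills the remaining blocks and leaves exactly~\eqref{eq:torsion-free}. The differences are cosmetic: you use a coarser \( 2\times 2 \) block split and spell out the horizontality argument that the paper merely asserts, while the paper writes the finer blocks and all four torsion-free equations before invoking the conic conditions.
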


\begin{proof}
  Let us write \( \eta \) in block form:
  \begin{equation*}
    \eta =
    \begin{pmatrix}
      \vu & \vv & \vc\\
      \vd & \ve & \vf\\
      \vg & \vh & \vk
    \end{pmatrix}
    ,
  \end{equation*}
  where \( \vu,\vv,\vd,\ve \in M_{n}(\bR) \),
  \( \vc,\vf,\vg^{T},\vh^{T} \in M_{n,2}(\bR) \), and
  \( \vk\in M_{2}(\bR) \).

  Then, first special symplectic
  condition~\ref{item:special-symplectic-i} implies:
  \begin{equation*}
    \ve = -\vu, \quad
    \vd = \vv, \quad
    \vf = - \vc \ii_{2}, \quad
    \vh = \ii_{2} \vg, \quad
    \ii_{2}\vk = -\vk \ii_{2}.
  \end{equation*}

  Writing \( \vc \in M_{n,2} \) as two columns \( \vc =
  \begin{pmatrix}
    \vp&\vq
  \end{pmatrix}
  \), \( \vp,\vq \in M_{n,1}(\bR) \), we have
  \begin{equation*}
    \vf = - \vc \ii _{2} =
    \begin{pmatrix}
      \vq&-\vp
    \end{pmatrix}
    .
  \end{equation*}
  Analogously, writing \( \vg \in M_{2,n} \) as two rows, named
  \( \vl, \vm \in M_{1,n}(\bR) \), we have
  \begin{equation*}
    \vh = \ii _{2} \vg =
    \begin{pmatrix}
      \vm\\
      -\vl
    \end{pmatrix}
    .
  \end{equation*}
  Finally, \( \ii_{2}\vk = -\vk\ii_{2} \) implies that \( \vk \) is a
  symmetric traceless matrix, thus leading to
  \begin{equation*}
    \eta =
    \begin{pmatrix}
      \vu & \vv & \vp & \vq \\
      \vv & -\vu & \vq & -\vp\\
      \vl & \vm & x & y\\
      \vm & -\vl & y & -x
    \end{pmatrix}
    ,
  \end{equation*}
  for some scalar-valued one-forms \( x \),~\( y \).

  The second symmetry to be exploited
  is~\ref{item:special-symplectic-G}, which gives
  \begin{equation*}
    \vu^{T} = \vu,\quad
    \vv^{T} = \vv,\quad
    \vm = -\vq^{T},\quad
    \vl = -\vp^{T}.
  \end{equation*}
  Thus, we obtain
  \begin{equation*}
    \eta =
    \begin{pmatrix}
      \vu & \vv & \vp & \vq \\
      \vv & -\vu & \vq & -\vp\\
      -\vp^{T} & -\vq^{T} & x & y \\
      -\vq^{T} & \vp^{T} & y & -x
    \end{pmatrix}
    , \qquad
    \vu^{T} = \vu,\quad
    \vv^{T} = \vv.
  \end{equation*}

  Next, the torsion-free condition~\ref{item:torsion-free} gives two
  vector, and two scalar equations
  \begin{align}
    \label{eq:tf1}
    \vu \wedge \hat{\va}+\vv \wedge \hat{\vb} + \vp \wedge\hat{\varphi} +
    \vq \wedge \hat{\psi} &= 0,\\
    \label{eq:tf2}
    \vv \wedge \hat{\va}-\vu \wedge \hat{\vb} + \vq \wedge\hat{\varphi} -
    \vp\wedge \hat{\psi} &= 0,\\
    \label{eq:tf3}
    -\vp^{T} \wedge \hat{\va}-\vq^{T} \wedge \hat{\vb }+ x\wedge\hat{\varphi}
    + y\wedge\hat{\psi} &= 0,\\
    \label{eq:tf4}
    -\vq^{T}\wedge \hat{\va}+\vp^{T}\wedge\hat{\vb} + y \wedge \hat{\varphi}
    -x\wedge\hat{\psi} &= 0.
  \end{align}
  However, the conic conditions~\ref{item:X-N-LC}
  and~\ref{item:X-conic} imply that each entry of \( \eta \) pointwise
  lies in the space of the components of \( \hat{a} \) and
  \( \hat{b} \).
  Thus~\eqref{eq:tf1} and~\eqref{eq:tf2} imply that \( p = 0 = q \),
  and then~\eqref{eq:tf1} and~\eqref{eq:tf2} show that
  \( x = 0 = y \).
  We thus have the claimed equation~\eqref{eq:torsion-free}.
\end{proof}

\begin{remark}
  \label{rem:total-symmetry}
  Resorting to index notation, and expanding in the one-forms
  \( \{\hat{a}^{i},\hat{b}^{i},\hat{\varphi},\hat{\psi}\} \), we can
  write
  \begin{equation*}
    \vu = (u^{i}_{j}),\quad\text{with}\quad
    u^{i}_{j} = u^{i}_{ajk}\hat{a}^{k} + u^{i}_{bj\ell}
    \hat{b}^{\ell},\quad
    \text{for}\ 1\leqslant i,j,k,\ell\leqslant n,
  \end{equation*}
  and similar expressions for \( \vv \).
  Considering the \( \hat{a}^{i}\wedge \hat{b}^{j} \)-terms in the
  torsion-free condition~\eqref{eq:torsion-free} then gives
  \begin{equation}
    \label{eq:uv-comp}
    u^{i}_{ajk} = -v^{i}_{bjk},\quad u^{i}_{bjk} = v^{i}_{ajk},
  \end{equation}
  hence
  \begin{equation*}
    u^{i}_{j}  = -v^{i}_{bjk} \hat{a}^{k} + v^{i}_{ajk} \hat{b}^{k}.
  \end{equation*}
  So using the complex structure, we have
  \begin{equation}
    \label{eq:u-Jv}
    \vu = J\vv.
  \end{equation}

  Furthermore, the symmetry of matrices \( \vu \) and~\( \vv \)
  together with the relations~\eqref{eq:uv-comp} gives that the
  coefficients \( v^{i}_{ajk} \) and~\( v^{i}_{bjk} \) are totally
  symmetric under permutation of all indices:
  \begin{gather}
    \label{eq:va-sym}
    v^{i}_{ajk} = v^{j}_{aki} = v^{k}_{aij} = v^{i}_{akj} =
    v^{j}_{aki} = v^{k}_{aji},
    \\
    \label{eq:vb-sym}
    v^{i}_{bjk} = v^{j}_{bki} = v^{k}_{bij} = v^{i}_{bkj} =
    v^{j}_{bki} = v^{k}_{bji},
  \end{gather}
  and therefore determine a symmetric three-tensor on~\( C \).
  This is the standard holomorphic three-tensor associated to special
  complex geometry, cf.~\cite{Freed:special,Lledo-MvPV:special}.
\end{remark}

The one-form for the special connection
\( \omega_{\nabla} = \omega_{\LC}+\eta \) is now
\begin{equation*}
  \omega_{\nabla} =
  \begin{pmatrix}
    \tilde{\mu} + \vu & \varphi \id_{n} + \tilde{\lambda} + \vv &
    \tilde{\vb}
    & \tilde{\va}\\
    -\varphi \id_{n}-\tilde{\lambda} +\vv & \tilde{\mu}-\vu
    &-\tilde{\va}
    & \tilde{\vb}\\
    \tallmstrut \tilde{\vb}^{T} & -\tilde{\va}^{T} & 0
    & \varphi\\
    \tilde{\va}^{T} & \tilde{\vb}^{T} & -\varphi & 0
  \end{pmatrix}
  .
\end{equation*}
Using the torsion-free equations~\eqref{eq:torsion-free}, one finds
that the curvature of the special connection is given by
\begin{equation*}
  \Omega_{\nabla} =
  \begin{pmatrix}
    T+U & V+W & 0 & 0 \\
    V-W & T-U & 0 & 0\\
    0 & 0 & 0 & 0\\
    0 & 0 & 0 & 0
  \end{pmatrix}
  ,
\end{equation*}
where
\begin{align}
  \label{eq:T}
  T &= \tilde{\mu} + \tilde{\va} \wedge
      \tilde{\va}^{T} + \tilde{\vb} \wedge \tilde{\vb}^{T} + \vu\wedge\vu +
      \vv\wedge\vv,\\
  \label{eq:U}
  U &= d\vu + \tilde{\mu}\wedge \vu + \vu\wedge \tilde{\mu}
      + \tilde{\lambda}\wedge \vv - \vv\wedge \tilde{\lambda} +
      2\varphi\wedge \vv,
  \\
  \label{eq:V}
  V &= d\vv + \tilde{\mu}\wedge\vv + \vv\wedge\tilde{\mu}
      - \tilde{\lambda}\wedge\vu + \vu\wedge\tilde{\lambda}
      - 2\varphi\wedge\vu,\\
  \label{eq:W}
  W &= \tilde{\lambda} + \tilde{\va}\wedge\tilde{\vb}^{T} -
      \tilde{\vb}\wedge\tilde{\va}^{T} + 2\pi^{*}\omega_{S}\id_{n}
      + \vu\wedge\vv - \vv\wedge\vu.
\end{align}
As the special conditions requires \( \Omega_{\nabla} = 0\), the next
result summarizes the situation.

\begin{proposition}
  \( S \) is projective special Kähler if and only if on the
  cone~\( C \) there is a one-form~\( \eta \) as in Lemma~\ref{lem:eta} so
  that the expressions~\eqref{eq:T}--\eqref{eq:W} satisfy
  \( T = U = V = W = 0 \).
\end{proposition}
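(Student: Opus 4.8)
The plan is to assemble the pieces already in place rather than to compute anything new. The definition of projective special Kähler for \( S \) amounts exactly to the existence of a one-form \( \eta = \omega_{\nabla} - \omega_{\LC} \) on \( C \) satisfying the six conditions \ref{item:flat}--\ref{item:X-conic}. First I would note that Lemma~\ref{lem:eta} has already disposed of conditions \ref{item:torsion-free}--\ref{item:X-conic}: the special symplectic and conic requirements \ref{item:special-symplectic-i}--\ref{item:X-conic} force \( \eta \) into the stated block form with symmetric \( \vu,\vv \), and the torsion-free condition \ref{item:torsion-free} then reads precisely as \eqref{eq:torsion-free}. Conversely, any \( \eta \) of the shape in Lemma~\ref{lem:eta} manifestly satisfies \ref{item:special-symplectic-i}--\ref{item:X-conic}, while \eqref{eq:torsion-free} reproduces \ref{item:torsion-free}. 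Hence, once \( \eta \) is taken to be of the Lemma's form, the \emph{only} outstanding requirement is flatness \ref{item:flat}.

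Second, I would invoke the displayed expression for \( \Omega_{\nabla} \) in the coframe \( \theta_{C} \), obtained from \( \omega_{\nabla} = \omega_{\LC} + \eta \) after using the torsion-free equations \eqref{eq:torsion-free} to simplify. Since its bottom two rows and right two columns vanish identically, flatness \( \Omega_{\nabla} = 0 \) collapses to the vanishing of the upper-left array of \( n\times n \) blocks, namely
\[
  T + U = 0,\quad V + W = 0,\quad V - W = 0,\quad T - U = 0.
\]
Taking sums and differences of these four matrix equations yields at once the equivalent system \( T = U = V = W = 0 \), which is the claimed condition.

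Combining the two steps gives the biconditional: \( S \) is projective special Kähler exactly when there exists an \( \eta \) as in Lemma~\ref{lem:eta} with \eqref{eq:T}--\eqref{eq:W} all vanishing. I do not anticipate a serious obstacle, as the heavy computation producing \( T,U,V,W \) is already recorded; the only point deserving care is the converse direction, that an \( \eta \) of the Lemma's form together with \( T = U = V = W = 0 \) genuinely reconstructs a special connection. This holds because the block structure supplies the symplectic and conic conditions, \eqref{eq:torsion-free} supplies the torsion-free condition, and \( T = U = V = W = 0 \) supplies flatness, so \( \nabla = \nabla^{\LC} + \eta \) satisfies all of \ref{item:flat}--\ref{item:X-conic} and thus makes \( C \) special Kähler with conic symmetry \( X \).
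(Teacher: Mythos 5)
Your proposal is correct and follows essentially the same route as the paper: the proposition there is stated as a summary of the preceding work, with Lemma~\ref{lem:eta} accounting for conditions \ref{item:torsion-free}--\ref{item:X-conic} and the displayed block form of \( \Omega_{\nabla} \) reducing flatness \ref{item:flat} to \( T=U=V=W=0 \). Your explicit check of the converse direction is a sensible addition, but it introduces no idea beyond what the paper's computation already contains.
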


\begin{remark}
  \label{rem:flat}
  The special connection reduces to the Levi-Civita connection of the
  cone exactly when \( \vu = \vv = 0 \).
  In this situation, \eqref{eq:torsion-free} is satisfied and
  \( U = 0 = V \).
  What remains are the equations \( T = 0 = W \), now determine the
  Kähler curvature of~\( S \):
  \begin{equation}
    \label{eq:curv-CH}
    \Mu = - \va\wedge\va^{T} - \vb\wedge\vb^{T} \eqand
    \Lambda = - \va\wedge\vb^{T} + \vb\wedge\va^{T}
    - 2\omega_{S}\id_{n}.
  \end{equation}
  Thus
  \begin{equation*}
    \Omega_{S} = - \{ \theta\wedge\theta^{T} + (J \theta)\wedge
    (J\theta)^{T} + 2\omega_{S} J \},
  \end{equation*}
  which is the curvature tensor of complex hyperbolic
  space~\( \CH(n) \) with holomorphic sectional curvature~\( -1 \).
  In this case, we have
  \begin{equation*}
    \mu =
    \begin{pmatrix}
      0 & \va_{R}^{T}\\
      -\va_{R} & 0_{n-1}
    \end{pmatrix}
    \eqand
    \lambda =
    \begin{pmatrix}
      2b_{1} & \vb_{R}^{T}\\
      \vb_{R}& b_{1}\id_{n-1}
    \end{pmatrix}
    ,
  \end{equation*}
  where \( \va_{R} = (a^{2},\dots,a^{n}) \), etc.
\end{remark}

\begin{example}
  \label{ex:CH1}
  The case the complex hyperbolic line, i.e.\ \( S = \CH(1) \), with
  arbitrary (negative) holomorphic sectional curvature, was discussed
  in detail in~\cite{Macia-S:c-map}.
  One has \( \mu = 0 \) and \( \lambda = -cb \) for some
  \( c\in\bR \), where the holomorphic sectional curvature is
  \( -c^{2}/4 \).
  It was shown that there are only solutions to the projective special
  Kähler equations in the cases when the holomorphic sectional
  curvature is \( -1 \) or \( -1/3 \).
\end{example}

\section{Integrability equations}
\label{sec:integrability}

To understand the special geometry better, let us consider the
integrability conditions related to the torsion-free condition and the
vanishing of \( T \), \( U \), \( V \), and \( W \).
Differentiating \( U = 0 \), and substituting for \( du \) and
\( dv \) from \( U = 0 = V \), we get
\begin{equation}
  \label{eq:dU}
  0 = \tilde{\Mu}\wedge\vu - \vu\wedge\tilde{\Mu}
  + \tilde{\Lambda}\wedge\vv + \vv\wedge\tilde{\Lambda}
  + 4\pi^{*}\omega_{S}\wedge\vv
\end{equation}
after substituting the expressions for \( d\vu \) and~\( d\vv \) from
\( U = 0 = V \).  Similarly, differentiating \( V = 0 \) gives
\begin{equation}
  \label{eq:dV}
  0 = \tilde{\Mu}\wedge\vv - \vv\wedge\tilde{\Mu}
  - \tilde{\Lambda}\wedge\vu - \vu\wedge\tilde{\Lambda}
  - 4\pi^{*}\omega_{S}\wedge\vu.
\end{equation}
It is tempting to substitute for \( \tilde{\Mu} \) and
\( \tilde{\Lambda} \) using \( T = 0 = W \), but after applying the
torsion-free condition~\eqref{eq:torsion-free} this yields no
information.
Thus these equations are consequences of the torsion-free relation and
the vanishing of \( T \) and~\( W \).
However, these equations can provide useful constraints on \( u \) and
\( v \) as we will see below.
Similar considerations show that the system of equations
\( T=0=U=V=W \), \eqref{eq:torsion-free}, together with the lifts of
\eqref{eq:structural}, \eqref{eq:Mu-Lambda}, and the differential
of~\eqref{eq:Mu-Lambda},
\begin{gather*}
  d\Mu = \Mu \wedge \mu - \mu \wedge \Mu - \Lambda \wedge \lambda +
  \lambda \wedge \Lambda,\\
  d\Lambda = \Mu \wedge \lambda - \mu \wedge \Lambda + \Lambda \wedge
  \mu - \lambda \wedge \Mu,
\end{gather*}
in the variables
\( \tilde{\mu},\tilde{\lambda},\tilde{\Mu},\tilde{\Lambda},\vu,\vv \)
is closed under exterior derivatives.

\section{Flat factors}
\label{sec:flat-factors}

Following~\cite{Lichnerowicz-M:symplectic-announce}, it is reasonable
to study the situation when the Kähler group~\( S \) is a product
\( \widetilde{S} = S^{0} \times R \) with \( S^{0} \) flat and Kähler.
Note that statements in~\cite{Lichnerowicz-M:Kaehlerian} modify those
of the previous reference, and the correctness of those results is not
clear.

If \( S = S^{0} \times R \) with \( S^{0} \) flat and Kähler, then we
can split \( \va = (a_{0}, a_{R})^{T} \) etc.\ and write
\begin{equation*}
  \vu =
  \begin{pmatrix}
    \vu_{0} & \vu_{+} \\
    \vu_{-} & \vu_{R}
  \end{pmatrix}
  \qquad
  \mu =
  \begin{pmatrix}
    \mu_{0} & 0 \\
    0 & \mu_{R}
  \end{pmatrix}
\end{equation*}
and so on, with \( \vu_{-} = (\vu_{+})^{T} \).  It follows that
\begin{equation*}
  \Mu =
  \begin{pmatrix}
    0 & 0 \\
    0 & \Mu_{R}
  \end{pmatrix}
  \quad
  \Lambda =
  \begin{pmatrix}
    0 & 0 \\
    0 & \Lambda_{R}
  \end{pmatrix}.
\end{equation*}
Now, suppressing wedge signs, \eqref{eq:dU}~reads
\begin{equation}
  \label{eq:dU-flat}
  0 =
  \begin{pmatrix}
    4\pi^{*}\omega_{S}\vv_{0} & - \vu_{+}\tilde{\Mu}_{R} +
    \vv_{+}\tilde{\Lambda}_{R} + 4\pi^{*}\omega_{S}\vv_{+}\\
    \tilde{\Mu}_{R}\vu_{-}+\tilde{\Lambda}_{R}\vv_{-}+4\pi^{*}\omega_{S}\vv_{-}
    &(dU)_{R}
  \end{pmatrix},
\end{equation}
The \( (0,0) \)-component of this equation gives
\begin{equation*}
  4\pi^{*}\omega_{S}\wedge \vv_{0} = 0,
\end{equation*}
which for \( \dim_{\bR}S > 2 \), implies \( \vv_{0} = 0 \).
Similarly, from~\eqref{eq:dV} we get \( \vu_{0} = 0 \).
The symmetries from the torsion-free conditions, \eqref{eq:u-Jv},
\eqref{eq:va-sym} and~\eqref{eq:vb-sym}, now imply that \( \vu_{+} \)
and \( \vv_{+} \) have no \( \tilde{a}_{0} \)- or
\( \tilde{b}_{0} \)-components.
Now the \( + \)-component of~\eqref{eq:dU-flat} only contains
\( \tilde{a}_{0} \) and \( \tilde{b}_{0} \) terms
in~\( \pi^{*}\omega_{S} \), so
\( 4\pi^{*}\omega_{0}\wedge\vv_{+} = 0 \), giving \( \vv_{+} = 0 \),
and \( \vv_{-} = \vv_{+}^{T} = 0 \); similarly,
\( \vu_{+} = 0 = \vu_{-} \).

The \( (0,0) \)-component of \( W = 0 \), is
\( \tilde{\va}_{0} \wedge \tilde{\vb}_{0} - \tilde{\vb}_{0} \wedge
\tilde{\va}_{0} + 2\pi^{*}\omega_{S}\id_{m} = 0 \), for
\( \dim_{\bC} S^{0} = m \).
But \( \omega_{S} = \omega_{0} + \omega_{R} \), so if
\( \dim R > 0 \), we get a contradiction.

If \( \dim R = 0 \), then we have \( \dim_{\bR}S = 2 \) and
\( S = S^{0} \).
But then \( S^{0} \) is Abelian, \( \lambda = 0 = \mu \) and the
vanishing of \( U \) and~\( V \)~\eqref{eq:U} and~\eqref{eq:V}, imply
\( \vu = 0 = \vv \).
Finally \( W = 0 \), gives
\( 4\tilde{\va}_{0}\wedge \tilde{\vb}_{0} = 0 \), a contradiction.

Thus we have proved:

\begin{proposition}
  \label{prop:no-flat}
  Suppose \( S \) is a Lie group with a left-invariant Kähler
  structure that extends to a (not necessarily left-invariant)
  projective special Kähler structure.
  Then the de Rham decomposition of the universal cover
  \( \widetilde{S} \) has no flat Kähler factor.  \qed
\end{proposition}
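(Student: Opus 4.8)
The plan is to argue by contradiction, assuming that the de Rham decomposition of \( \widetilde{S} \) contains a nontrivial flat Kähler factor, so that \( \widetilde{S} = S^{0}\times R \) with \( S^{0} \) flat and Kähler of complex dimension \( m>0 \). Since the de Rham splitting is a splitting of Kähler manifolds, the metric, the complex structure, and hence the Levi-Civita connection all respect the product: the connection forms \( \mu,\lambda \) are block-diagonal in the \( (0,R) \)-decomposition and the curvature forms \( \Mu,\Lambda \) vanish identically on the \( S^{0} \)-block. I would then write the \( \vu,\vv \) of Lemma~\ref{lem:eta} in the induced block form, with \( (0,0) \)-, mixing-, and \( R \)-blocks \( \vu_{0},\vu_{\pm},\vu_{R} \) (and likewise for \( \vv \)), recording \( \vu_{-}=\vu_{+}^{T} \) from the symmetry of \( \vu \).

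The heart of the argument is to show that every block of \( \vu,\vv \) touching the flat factor vanishes. First I would feed the block structure into the integrability equations~\eqref{eq:dU} and~\eqref{eq:dV}; since \( \Mu,\Lambda \) are supported on the \( R \)-block, their \( (0,0) \)-components collapse to \( 4\pi^{*}\omega_{S}\wedge\vv_{0}=0 \) and \( 4\pi^{*}\omega_{S}\wedge\vu_{0}=0 \). For \( \dim_{\bR}S>2 \) the Lefschetz-type map \( \alpha\mapsto\omega_{S}\wedge\alpha \) is injective on one-forms, forcing \( \vu_{0}=\vv_{0}=0 \). Next I would invoke the total symmetry of the special-geometry cubic tensor — equations~\eqref{eq:u-Jv}, \eqref{eq:va-sym} and~\eqref{eq:vb-sym} of Remark~\ref{rem:total-symmetry} — to deduce that the mixing blocks \( \vu_{+},\vv_{+} \) carry no \( \tilde{a}_{0} \)- or \( \tilde{b}_{0} \)-components. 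With this in hand the \( (+) \)-block of~\eqref{eq:dU-flat} meets the flat factor only through \( \pi^{*}\omega_{S} \), so it reduces to \( 4\pi^{*}\omega_{0}\wedge\vv_{+}=0 \), yielding \( \vv_{+}=0 \) and hence \( \vv_{-}=\vv_{+}^{T}=0 \); the symmetric computation gives \( \vu_{+}=\vu_{-}=0 \).

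Finally I would extract the contradiction from \( W=0 \). Once every block of \( \vu,\vv \) meeting \( S^{0} \) is killed and the flat curvature block vanishes, the \( (0,0) \)-component of~\eqref{eq:W} collapses to \( \tilde{\va}_{0}\wedge\tilde{\vb}_{0}^{T}-\tilde{\vb}_{0}\wedge\tilde{\va}_{0}^{T}+2\pi^{*}\omega_{S}\id_{m}=0 \). Writing \( \omega_{S}=\omega_{0}+\omega_{R} \) and noting that the first two terms involve only \( a_{0},b_{0} \), the \( \omega_{R} \)-part has nothing to cancel it, so \( \dim R=0 \) whenever \( \dim_{\bR}S>2 \). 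The remaining degenerate case \( \dim R=0 \) forces \( \dim_{\bR}S=2 \) and \( S=S^{0} \) abelian with \( \mu=\lambda=0 \); then~\eqref{eq:U} and~\eqref{eq:V} force \( \vu=\vv=0 \), and \( W=0 \) degenerates to \( 4\tilde{\va}_{0}\wedge\tilde{\vb}_{0}=0 \), again impossible (consistent with the exclusion of vanishing curvature in Example~\ref{ex:CH1}). I expect the main obstacle to be the step eliminating the mixing blocks \( \vu_{+},\vv_{+} \): it is not purely algebraic and relies on combining the total symmetry of the cubic tensor with the precise shape of the integrability equation~\eqref{eq:dU-flat}, rather than on the defining relations \( T=U=V=W=0 \) alone.
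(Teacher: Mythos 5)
Your proposal is correct and follows essentially the same route as the paper's own proof in Section~\ref{sec:flat-factors}: the same block decomposition of \( \vu,\vv \), the same use of the integrability equations \eqref{eq:dU}--\eqref{eq:dV} combined with the total symmetry of Remark~\ref{rem:total-symmetry} to kill all blocks touching the flat factor, and the same contradiction from the \( (0,0) \)-component of \( W=0 \), with the identical treatment of the degenerate case \( \dim R=0 \). The only cosmetic difference is that you make explicit the injectivity of \( \alpha\mapsto\omega_{S}\wedge\alpha \) on one-forms for \( \dim_{\bR}S>2 \), which the paper leaves implicit.
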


\section{Products}
\label{sec:products}

Suppose the universal cover of~\( S \) is a product of Kähler groups,
\( \widetilde{S} = S_{1}\times S_{2} \).
Then splitting \( \va = (\va_{1},\va_{2}) \) etc., we have
\begin{equation*}
  \Mu =
  \begin{pmatrix}
    \Mu_{1}&0 \\
    0&\Mu_{2}
  \end{pmatrix}
  \eqand
  \Lambda =
  \begin{pmatrix}
    \Lambda_{1}&0 \\
    0& \Lambda_{2}
  \end{pmatrix}
  .
\end{equation*}
Writing
\begin{equation*}
  \vu =
  \begin{pmatrix}
    \vu_{1}&\vu_{+}\\
    \vu_{-}&\vu_{2}
  \end{pmatrix}
\end{equation*}
etc., \eqref{eq:dU} has components
\begin{align}
  \label{eq:dU-prod-11}
  0 &= \tilde{\Mu}_{1}\wedge\vu_{1} - \vu_{1}\wedge\tilde{\Mu}_{1}
      + \tilde{\Lambda}_{1}\wedge\vv_{1}
      + \vv_{1}\wedge\tilde{\Lambda}_{1}
      + 4\pi^*\omega_{S}\wedge\vv_{1}, \\
  \label{eq:dU-prod-12}
  0 &= \tilde{\Mu}_{1}\wedge\vu_{+} - \vu_{+}\wedge\tilde{\Mu}_{2}
      + \tilde{\Lambda}_{1}\wedge\vv_{+}
      + \vv_{+}\wedge\tilde{\Lambda}_{2}
      + 4\pi^*\omega_{S}\wedge\vv_{+}, \\
  \label{eq:dU-prod-21}
  0 &= \tilde{\Mu}_{2}\wedge\vu_{-} - \vu_{-}\wedge\tilde{\Mu}_{1}
      + \tilde{\Lambda}_{2}\wedge\vv_{-}
      + \vv_{-}\wedge\tilde{\Lambda}_{1}
      + 4\pi^*\omega_{S}\wedge\vv_{-}, \\
  \label{eq:dU-prod-22}
  0 &= \tilde{\Mu}_{2}\wedge\vu_{2} - \vu_{2}\wedge\tilde{\Mu}_{2}
      + \tilde{\Lambda}_{2}\wedge\vv_{2}
      + \vv_{2}\wedge\tilde{\Lambda}_{2}
      + 4\pi^*\omega_{S}\wedge\vv_{2}.
\end{align}
Let us use \( V^{a,b} \) to denote the bundle
\( \pi^{*}(\Lambda^{a}T^{*}S_{1} \wedge \Lambda^{b}T^{*}S_{2}) \).
The terms in~\eqref{eq:dU-prod-11} in \( V^{1,2} + V^{0,3} \) are
\( 4\pi^{*}\omega_{S_{2}}\wedge \vv_{1} \), so if
\( \dim_{\bR} S_{2} > 2 \), we have \( \vv_{1} = 0 \).
Similarly, using~\eqref{eq:dV}, we get \( \vu_{1} = 0 \).
Now, under this condition, the total symmetry~\eqref{eq:va-sym}
and~\eqref{eq:vb-sym} implies \( \vu_{+} \) and \( \vv_{+} \) consist
of one-forms in~\( V^{0,1} \).

Similarly, if we also have \( \dim_{\bR} S_{1} > 2 \), then
\( \vv_{2} = 0 = \vu_{2} \), and \( \vu_{\pm} = 0 = \vv_{\pm} \),
i.e.\ \( \vu = 0 = \vv \).
But then \( S = \CH(n) \) which is not a product.
Thus a product structure has at least one factor of real
dimension~\( 2 \).

\begin{proposition}
  \label{prop:three}
  Suppose the universal cover \( \widetilde{S} \) is product of three
  or more Kähler factors.
  Then there are exactly three factors and
  \( \widetilde{S} = \CH(1)\times \CH(1)\times \CH(1) \), each with
  holomorphic section curvature~\( -1 \).
\end{proposition}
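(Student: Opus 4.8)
The plan is to show first that every de Rham factor must be a complex hyperbolic line, and then to extract both the number of factors and their curvature from the integrability equation~\eqref{eq:dU}.

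Suppose \( \widetilde S \) has \( k\geqslant 3 \) factors \( S_{1},\dots ,S_{k} \). By Proposition~\ref{prop:no-flat} none is flat, so each is an irreducible non-flat Kähler group of real dimension at least~\( 2 \). Writing \( \widetilde S = S_{i}\times\bigl(\prod_{j\neq i}S_{j}\bigr) \) and regarding the second factor as a single Kähler group, its real dimension is at least \( 2(k-1)\geqslant 4>2 \). The two-factor analysis of Section~\ref{sec:products} then forces \( \dim_{\bR}S_{i}=2 \), for otherwise both factors would have dimension greater than~\( 2 \), giving \( \vu=\vv=0 \) and making \( \widetilde S \) the irreducible space \( \CH(n) \), not a product. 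Hence every \( S_{i} \) is a two-dimensional non-flat Kähler group, i.e. \( S_{i}=\CH(1) \); by Example~\ref{ex:CH1} we may take \( \mu=0 \) and \( \lambda=\operatorname{diag}(-c_{i}b^{i}) \) for constants \( c_{i} \), so that \eqref{eq:Mu-Lambda} gives the block-diagonal curvature \( \Mu=0 \) and \( \Lambda=\operatorname{diag}(-c_{i}^{2}\,a^{i}\wedge b^{i}) \).

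I would next record the shape of \( \vu \) and \( \vv \) (Lemma~\ref{lem:eta}). Applying the block-vanishing of Section~\ref{sec:products} to each factor against its complement gives \( \vv_{ii}=0=\vu_{ii} \) and shows that the entries \( \vv_{ij} \) with \( i\neq j \) involve only \( \tilde a^{m},\tilde b^{m} \) with \( m\neq i,j \); combined with the total symmetry~\eqref{eq:va-sym}--\eqref{eq:vb-sym} this means the defining cubic tensor is supported on triples of \emph{distinct} indices, so \( \vv_{ij}=\sum_{m\neq i,j}(C^{a}_{ijm}\tilde a^{m}+C^{b}_{ijm}\tilde b^{m}) \) with \( \vu=J\vv \) by~\eqref{eq:u-Jv}. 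Moreover the off-diagonal part of \( T=0 \) from~\eqref{eq:T} reads \( \tilde a^{i}\wedge\tilde a^{j}+\tilde b^{i}\wedge\tilde b^{j}+(\vu\wedge\vu+\vv\wedge\vv)_{ij}=0 \), which is impossible when \( \vu=\vv=0 \) and \( i\neq j \); hence the cubic tensor is not identically zero.

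The decisive step is~\eqref{eq:dU}. Since \( \tilde\Mu=0 \) and \( \tilde\Lambda \) is diagonal, its \( (i,j) \)-entry collapses to \( P_{ij}\wedge\vv_{ij}=0 \), where
\[
  P_{ij}=(4-c_{i}^{2})\,\tilde a^{i}\wedge\tilde b^{i}+(4-c_{j}^{2})\,\tilde a^{j}\wedge\tilde b^{j}+4\sum_{l\neq i,j}\tilde a^{l}\wedge\tilde b^{l}.
\]
Expanding \( P_{ij}\wedge\vv_{ij} \) and reading off the coefficient of the \( 3 \)-form \( \tilde a^{l}\wedge\tilde b^{l}\wedge\tilde a^{m} \) (respectively \( \tilde a^{l}\wedge\tilde b^{l}\wedge\tilde b^{m} \)) for a fourth index \( l\neq i,j,m \) yields \( 4C^{a}_{ijm}=0 \) (respectively \( 4C^{b}_{ijm}=0 \)). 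If \( k\geqslant 4 \) such an \( l \) always exists, so the whole cubic tensor vanishes, contradicting the previous paragraph; hence there are exactly three factors. When \( k=3 \) no fourth index is available, and the only surviving coefficients are those of \( \tilde a^{i}\wedge\tilde b^{i}\wedge\tilde a^{m} \) and its \( \tilde b \)-analogue, giving \( (4-c_{i}^{2})C_{ijm}=0 \); as the cubic tensor is nonzero this forces \( c_{i}^{2}=4 \) for every \( i \), i.e. each factor has holomorphic sectional curvature~\( -1 \). The main obstacle is the bookkeeping here: one must check that, after restricting \( \vu \) and \( \vv \) to distinct-index triples, the \( 3 \)-form coefficients in \( P_{ij}\wedge\vv_{ij}=0 \) are genuinely linearly independent and do not cancel between different summands, so that a fourth index really does annihilate the cubic tensor. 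Everything else is routine, and one may finally verify that \( U=V=W=0 \) are consistent for \( k=3 \) (fixing the normalisation of the cubic tensor) to confirm that the surviving case is non-empty.
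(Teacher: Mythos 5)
Your proposal is correct and is essentially the paper's own argument: the two-factor analysis of Section~\ref{sec:products} together with Proposition~\ref{prop:no-flat} forces every factor to be \( \CH(1) \), and the curvature normalisation \( r_{i}=-4 \) (holomorphic sectional curvature \( -1 \)) is then read off from the coefficients of the integrability equation~\eqref{eq:dU}, just as in the paper. The only variation is how ``exactly three'' is obtained --- the paper regroups the factors into three blocks, so that any fourth factor would force a block of dimension greater than two, whereas you exclude \( k\geqslant 4 \) by using the fourth factor's term in~\eqref{eq:dU} to annihilate the cubic tensor; both are sound, and the linear-independence check you flag is immediate because the relevant \( 3 \)-forms are pairwise distinct coframe monomials.
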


\begin{proof}
  Write \( \widetilde{S} = S_{A} \times S_{B} \times S_{C} \), with
  each factor of non-zero dimension.
  Then \( \dim_{\bR}(S_{A}\times S_{B}) \) is strictly greater
  than~\( 2 \), so \( \dim_{\bR} S_{C} = 2 \).
  Grouping in different ways yields
  \( \dim_{\bR}S_{A} = 2 = \dim_{\bR}S_{B} \).
  By Proposition~\ref{prop:no-flat}, these factors are not flat, so are each
  isomorphic to \( \CH(1) \).
  In particular, \( M_{i} = 0 \) and
  \( \Lambda_{i} = r_{i}\omega_{S_{i}} \), for \( i=A,B,C \), with
  \( r_{i} \in\bR\setminus\{0\} \).

  Writing
  \begin{equation*}
    \vu =
    \begin{pmatrix}
      u_{A}&u_{AB}&u_{AC}\\
      u_{BA}&u_{B}&u_{BC}\\
      u_{CA}&u_{CB}&u_{C}
    \end{pmatrix}
    ,\quad\text{etc.}
  \end{equation*}
  equation~\eqref{eq:dU} gives
  \( \pi^{*}(2r_{A}\omega_{S_{A}}+4\omega_{S})v_{A} = 0 \), implying
  \( v_{A} = 0 \), and so all diagonal entries in \( u \) and \( v \)
  are zero.
  For the off-diagonal terms, we have
  \( \pi^{*}((r_{A}+4)\omega_{S_{A}} +
  (r_{B}+4)\omega_{S_{B}}+\omega_{S_{C}})v_{AB} = 0 \).
  But \( u \) and \( v \) are not identically zero, so
  \( r_{i} = -4 \) for all \( i \), and
  \( v_{AB} \in \pi^{*}T^{*}S_{C} \), etc.
  Total symmetry of the \( u \) and \( v \) implies
  \( u_{AB} = u'\tilde{a}_{C} + u''\tilde{b}_{C} \),
  \( v_{AB} = u''\tilde{a}_{C} - u'\tilde{b}_{C} \), and cyclically,
  for some smooth functions \( u',u'' \) on the cone.
  The equations \( T = 0 = W \) only impose the constraint
  \( (u')^{2} + (u'')^{2} = 1 \), so \( u' = \cos(s) \),
  \( u''= \sin(s) \) for some smooth local function~\( s \).
  Equations~\eqref{eq:U} and~\eqref{eq:V}, then give
  \( ds = 2(\varphi + \tilde{b}_{A} + \tilde{b}_{B} + \tilde{b}_{C})
  \), which is an exact form, so \( s \)~is globally defined.
\end{proof}

\section{Rotations and intrinsic equations}
\label{sec:rotations}

Let \( S \) be as in Proposition~\ref{prop:no-flat}.
Consider its cone~\( C \).
The conic vector field \( X \) satisfies \( X\hook\varphi = 1 \),
\( L_{X}\varphi = 0 \), and \( X\hook\vu = 0 = X\hook\vv \).
Thus the vanishing of \( U \) and~\( V \) gives
\begin{gather*}
  L_{X}\vu = X\hook d\vu = -2\vv,\quad
  L_{X}\vv = X\hook d\vv = 2\vu,\\
  L_{JX}\vu = JX\hook d\vu = 0,\quad L_{JX}\vv = JX\hook d\vv = 0.
\end{gather*}
In particular, the matrices of one-forms \( \vu \) and~\( \vv \) are
not invariant under the conic symmetry and they do not descend
to~\( S \) even though they vanish on \( X \) and~\( JX \).

Suppose the curvature of the circle bundle \( C_{0} \to S \) is exact:
\( d\varphi = 2\pi^{*}\omega_{S} = 2\pi^{*}d\kappa \) for some
\( \kappa \in \Omega^{1}(S) \).
Then \( \varphi' = \varphi - 2\pi^{*}\kappa \) is a flat connection on
\( C_{0} \), so the pull-back \( C'_{0} \to \widetilde{S} \) to the
universal cover is a trivial circle bundle.
Let \( C' = \bR_{>0}\times C'_{0} \) be the special Kähler cone of
\( \widetilde{S} \) and choose a trivialisation of
\( C'_{0} \cong S^{1} \times \widetilde{S} \), writing points of
\( S^{1} \) as \( e^{\ii \tau} \), then \( X = \partial_{\tau} \) and
\( \varphi = d\tau + 2\pi^{*}\kappa \).

Consider new matrices of one-forms \( \vP \), \( \vQ \) obtained by
rotating the pair \( \vu \), \( \vv \) through some angle \( z \),
that is
\begin{equation*}
  \vP = \vu\cos z + \vv\sin z,\quad
  \vQ = -\vu\sin z + \vv\cos z.
\end{equation*}
Then, we find
\begin{align*}
  L_{X}\vP
  &= L_{X}\vu \cos z - \vu \sin z\, (X\hook dz) + L_{X}\vv \sin z
    + \vv \cos z \, (X\hook dz)\\
  &= (2 - (Xz)) \vu \sin z - (2 - (Xz)) \vv \cos z,
  \\
  L_{JX}\vP &= -((JX)z) \vu \sin z + ((JX)z) \vv \cos z,
\end{align*}
with a similar expressions for Lie derivatives of~\( \vQ \).
Putting \( z = 2\tau \) we get \( L_{X}\vP = L_{X}\vQ = 0 \) and hence
that \( \vP \) and \( \vQ \) are basic.
Then \( \vP = \pi^{*}\vp \) and \( \vQ = \pi^{*}\vq \), for some
one-forms on~\( S \) with values in \( n\times n \) symmetric
matrices.
Furthermore, this essentially the only choice: if \( x\vu+y\vv \) is
basic and nowhere zero, with \( x \) and~\( y \) smooth functions
on~\( C' \), then there is a trivialisation of \( C_{0}' \) such that
\( x = r\cos(2\tau+s) \) and \( y = r\sin(2\tau+s) \), with \( r>0 \)
the pull-back of a smooth function on~\( \widetilde{S} \) and with
\( s \)~a real constant.

\begin{definition}
  Let \( S \) be a Kähler group with \( \omega_{S} = d\kappa \) for
  some left-invariant form~\( \kappa \).
  A compatible projective special Kähler structure on \( S \) is
  \emph{homogeneous} if \( \vP \) and \( \vQ \) above are pull-backs
  of left-invariant matrix-valued one-forms \( \vp \) and \( \vq \)
  on~\( S \).
\end{definition}

Note that all group manifold examples of projective special Kähler
structures in the literature satisfy this definition.

We may now rewrite equations~\eqref{eq:T}--\eqref{eq:W} in terms
of~\( \vP,\vQ \) instead of \( \vu, \vv \).
We will see that the resulting equations are determined by
\( \vp, \vq \) on~\( S \).
First, notice that since \( \vu = J\vv \), we have \( \vP = J\vQ \).
Inverting the equations defining \( \vP \), \( \vQ \), gives
\begin{equation*}
  \vu = \vP\cos z - \vQ\sin z,\qquad
  \vv = \vP\sin z + \vQ\cos z.
\end{equation*}
Substituting this in the torsion-free
condition~\eqref{eq:torsion-free} we see the corresponding equation in
\( \vP,\vQ \) and get that it is equivalent to
\begin{equation}
  \label{eq:PQ-torsion-free}
  \vp \wedge\va + \vq \wedge \vb = 0
  = \vp \wedge\vb - \vq \wedge \va.
\end{equation}
Next, although \( \vu \) and \( \vv \) depend on the
angle function~\( z \), we have
\begin{equation*}
  \vu\wedge\vu + \vv\wedge\vv = \vP\wedge\vP + \vQ\wedge\vQ
  \eqand
  \vu\wedge\vv - \vv\wedge\vu = \vP\wedge\vQ - \vQ\wedge\vP.
\end{equation*}
Thus both sides of these equations are \( X \)-invariant.
We now see that the vanishing \( T,W \) is equivalent to
\begin{gather}
  \label{eq:T-PQ}
  \Mu + \vp\wedge\vp + \vq\wedge\vq = \Mu_{\CH},\\
  \label{eq:W-PQ}
  \Lambda + \vp\wedge\vq - \vq\wedge\vp = \Lambda_{\CH},
\end{gather}
on~\( S \), where \( \Mu_{\CH} \) and \( \Lambda_{\CH} \) are the
blocks of the curvature of \( \CH(n) \) given in \eqref{eq:curv-CH}.
On the other hand the expressions for \( U \), \( V \) depend on the
angle function~\( z \).  More precisely,
\begin{align*}
  U
  &= \bigl(d\vP + (\tilde{\mu}\wedge\vP + \vP\wedge\tilde{\mu})
    + (\lambda\wedge\vQ - \vQ\wedge\lambda) - 4\kappa\wedge\vQ \bigr)
    \cos z
    \eqbreak
    - \bigl(d\vQ + (\tilde{\mu}\wedge\vQ + \vQ\wedge\tilde{\mu})
    - (\tilde{\lambda}\wedge\vP - \vP\wedge\tilde{\lambda})
    + 4\kappa\wedge\vP \bigr)\sin z ,\\
  V
  &= \bigl(d\vP + (\tilde{\mu}\wedge\vP + \vP\wedge\tilde{\mu})
    + (\lambda\wedge\vQ - \vQ\wedge\lambda) - 4\kappa\wedge\vQ \bigr)
    \sin z
    \eqbreak
    + \bigl(d\vQ + (\tilde{\mu}\wedge\vQ +\vQ\wedge\tilde{\mu})
    - (\tilde{\lambda}\wedge\vP - \vP\wedge\tilde{\lambda})
    + 4\kappa\wedge\vP \bigr) \cos z.
\end{align*}
Therefore, vanishing of \( U \), \( V \) is equivalent to the
relations
\begin{gather}
  \label{eq:dP}
  d\vp + (\mu\wedge\vp + \vp\wedge\mu)
  + (\lambda\wedge\vq - \vq\wedge\lambda) - 4\kappa\wedge\vq = 0,\\
  \label{eq:dQ}
  d\vq + (\mu\wedge\vq + \vq\wedge\mu) - (\lambda\wedge\vp -
  \vp\wedge\lambda) + 4\kappa\wedge\vp = 0
\end{gather}
on~\( S \).

\begin{proposition}
  A simply-connected Kähler group \( S \) of dimension~\( 2n \) with
  exact Kähler form \( \omega_{S} = d\kappa \) admits a compatible
  projective special Kähler structure if and only if there are
  matrix-valued one-forms \( \vp,\vq \in \Omega^{1}(S,M_{n}(\bR))\)
  on~\( S \) satisfying \( \vp^{T} = \vp \), \( \vq^{T} = \vq \),
  \( \vp = J\vq \), the torsion-free
  condition~\eqref{eq:PQ-torsion-free}, the equations~\eqref{eq:T-PQ},
  \eqref{eq:W-PQ}, \eqref{eq:dP}, and~\eqref{eq:dQ}.

  This structure is homogeneous projective special Kähler if and only
  if \( \vp \) and \( \vq \) can be chosen left-invariant.  \qed
\end{proposition}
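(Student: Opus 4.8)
The plan is to assemble the proposition out of the cone-level characterisation already established, transferring data back and forth between the cone~\( C \) and the base~\( S \) by means of the rotation through \( z = 2\tau \) introduced above. All the genuine content — the equivalence of the torsion-free condition with~\eqref{eq:PQ-torsion-free}, of \( T = W = 0 \) with~\eqref{eq:T-PQ}--\eqref{eq:W-PQ}, and of \( U = V = 0 \) with~\eqref{eq:dP}--\eqref{eq:dQ} — has been carried out in the preceding paragraphs, so what remains is to package these dictionaries together with the descent argument into a clean biconditional, and then read off the homogeneous refinement.

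For the forward direction I would start from a compatible projective special Kähler structure. By the Proposition characterising such structures through the vanishing of \( T,U,V,W \) on the cone, there is an~\( \eta \) as in Lemma~\ref{lem:eta}, equivalently symmetric matrix-valued one-forms \( \vu,\vv \) on~\( C \) with \( \vu = J\vv \) by~\eqref{eq:u-Jv}, satisfying~\eqref{eq:torsion-free} and \( T = U = V = W = 0 \). Since \( \omega_{S} = d\kappa \) and \( S \) is simply-connected, the circle bundle \( C_{0}\to S \) is trivial, so I fix the trivialisation with \( \varphi = d\tau + 2\pi^{*}\kappa \) and \( X = \partial_{\tau} \) as above. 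Rotating through \( z = 2\tau \) yields basic forms \( \vP = \pi^{*}\vp \), \( \vQ = \pi^{*}\vq \); symmetry of \( \vp,\vq \) and the relation \( \vp = J\vq \) descend at once from the corresponding properties of \( \vu,\vv \). The four families of equations then translate termwise into~\eqref{eq:PQ-torsion-free}, \eqref{eq:T-PQ}, \eqref{eq:W-PQ}, \eqref{eq:dP} and~\eqref{eq:dQ}, producing the asserted one-forms on~\( S \).

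For the converse I would reverse the construction. Given \( \vp,\vq \) on~\( S \) with the stated algebraic and differential properties, exactness of \( \omega_{S} \) and simple connectivity again let me build~\( C \) with \( \varphi = d\tau + 2\pi^{*}\kappa \), and I set \( \vu = \pi^{*}\vp\cos(2\tau) - \pi^{*}\vq\sin(2\tau) \) and \( \vv = \pi^{*}\vp\sin(2\tau) + \pi^{*}\vq\cos(2\tau) \). Because \( \pi^{*}\vp \) and \( \pi^{*}\vq \) are annihilated by \( X \) and~\( JX \), the conic conditions \( X\hook\vu = 0 = JX\hook\vu \) (and likewise for \( \vv \)) hold; symmetry and \( \vu = J\vv \) are inherited from \( \vp = J\vq \); and reading the same equivalences backwards gives~\eqref{eq:torsion-free} and \( T = U = V = W = 0 \). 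The \( \eta \) built from this \( \vu,\vv \) as in Lemma~\ref{lem:eta} then satisfies all six conditions (i)--(vi), so \( C \) is special Kähler with \( X \) a conic symmetry, and \( S \) is projective special Kähler.

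For the homogeneous statement I would appeal directly to the Definition together with the essential-uniqueness remark preceding it: the basic pair \( (\vP,\vQ) \) is determined up to the constant rotation arising from the residual freedom \( \tau\mapsto\tau+\text{const} \) in the trivialisation, and such a constant rotation preserves left-invariance. Hence the structure admits a left-invariant descended pair precisely when \( \vp,\vq \) can be chosen left-invariant. The one step that genuinely needs care, and which is the crux of the whole statement, is the equivalence of \( U = V = 0 \) with~\eqref{eq:dP}--\eqref{eq:dQ}: this is the only place where the explicit angle \( z = 2\tau \) enters the defining equations, and one must verify that after substituting \( \vu,\vv \) in terms of \( \vP,\vQ \) the \( \cos z \)- and \( \sin z \)-coefficients separate into exactly the two \( \tau \)-independent relations on~\( S \). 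Everything else is bookkeeping within the already-established dictionary between \( C \) and~\( S \).
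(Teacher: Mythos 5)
Your proposal is correct and takes essentially the same route as the paper: the cone-level characterisation via \( \eta \) as in Lemma~\ref{lem:eta} with \( T=U=V=W=0 \), the trivialisation \( \varphi = d\tau + 2\pi^{*}\kappa \) and rotation by \( z = 2\tau \) to obtain basic \( \vP = \pi^{*}\vp \), \( \vQ = \pi^{*}\vq \), the term-by-term dictionary giving \eqref{eq:PQ-torsion-free}, \eqref{eq:T-PQ}, \eqref{eq:W-PQ}, \eqref{eq:dP}, \eqref{eq:dQ}, and the homogeneous refinement via the essential uniqueness of \( (\vP,\vQ) \) up to a constant rotation as in Remark~\ref{rem:rotate}. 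The paper leaves the two directions implicit as a chain of equivalences in the text preceding the statement, whereas you make the forward and converse readings (and the inverse rotation defining \( \vu,\vv \) from \( \vp,\vq \)) explicit; no new ingredient is needed or introduced.
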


\begin{remark}
  \label{rem:rotate}
  The function \( z \), or correspondingly the parameter~\( \tau \),
  is only defined up to a global additive constant.
  Therefore any choice of \( (\vp,\vq) \) can be replaced by a rotated
  version
  \( R_{s}(\vp,\vq) = (\vp\cos s + \vq\sin s,-\vp\sin s+\vq\cos s) \)
  for any constant~\( s \).
  The projective special conditions \eqref{eq:PQ-torsion-free},
  \eqref{eq:T-PQ}, \eqref{eq:W-PQ}, \eqref{eq:dP} and~\eqref{eq:dQ},
  are equivalent to the same system for \( R_{s}(\vp,\vq) \).
\end{remark}

\begin{remark}
  \label{rem:total-symmetry-pq}
  The discussion of Remark~\ref{rem:total-symmetry}, implies that the
  torsion-free condition is equivalent to total symmetry conditions of
  the form~\eqref{eq:va-sym}--\eqref{eq:vb-sym} for the
  \( \va,\vb \)-coefficients of \( p \) and~\( q \).
\end{remark}

\begin{remark}
  The integrability equations of~\S\ref{sec:integrability} are
  equivalent to the pair of equations
  \begin{align}
    \label{eq:int-pq1}
    \Mu\wedge \vp - \vp\wedge\Mu + \Lambda\wedge\vq + \vq\wedge\Lambda
    + 4\omega_{S}\wedge\vq &= 0,\\
    \label{eq:int-pq2}
    \Mu\wedge \vq - \vq\wedge\Mu - \Lambda\wedge\vp - \vp\wedge\Lambda
    - 4\omega_{S}\wedge\vp &= 0
  \end{align}
  on~\( S \).
  These have the advantage of not involving the one-form~\( \kappa \).
\end{remark}

\section{A four-dimensional example}
\label{sec:4d}

In this section we consider the Kähler products
\( \widetilde S = \CH(1)\times \CH(1) \).
We choose our \( \va,\vb \) compatible with the product, so that
\( da_{1} = 0 = da_{2} \) and \( db_{i} = c_{i}a_{i}\wedge b_{i} \),
\( i=1,2 \), with \( c_{i} \ne 0 \).
Replacing \( (a_{i},b_{i}) \) by \( (-a_{i},-b_{i}) \) if necessary,
we may ensure that \( c_{i} > 0 \).
Then
\( \omega_{S} = a_{1}\wedge b_{1} + a_{2}\wedge b_{2} =
d(b_{1}/c_{1}+b_{2}/c_{2}) \) is exact with left-invariant primitive,
so we may apply the equations of~\S\ref{sec:rotations} with the
splittings of~\S\ref{sec:products}.
We have \( \Mu_i=0 \), \( \Lambda_i = r_i \omega_{S_i} \),
\( r_i = - c_{i}^{2} < 0 \).
Moreover, \( \vp,\vq \) are \( M_2(\mathbb R) \)-valued one-forms.
Hence, \( p_1,p_{\pm},p_2,q_1,q_{\pm},q_2 \) are scalar one-forms.
The symmetries \( p^T=p \), \( q^{T}=q \), and \( p=Jq \) imply that
\( p \) and \( q \) are determined by, e.g., \( q_{1},q_{+},q_{2} \).

Comparing with the discussion in~\S\ref{sec:products}, consider the
bundles
\( W^{a,b} = \Lambda^{a}T^{*}S_{1} \wedge \Lambda^{b}T^{*}S_{2} \).
The integrability equation \eqref{eq:int-pq1} gives three equations
\begin{align}
  \label{eq:2CH1-1}
  0 &= (2r_{1}+4)\omega_{S_{1}}\wedge q_{1} + 4\omega_{S_{2}}\wedge
      q_{1},\\
  \label{eq:2CH1-plus}
  0 &= (r_{1}+4)\omega_{S_{1}}\wedge q_{+} +
      (r_{2}+4)\omega_{S_{2}}\wedge q_{+},\\
  \label{eq:2CH1-2}
  0 &= 4\omega_{S_{1}}\wedge q_{2} + (2r_{2}+4)\omega_{S_{2}}\wedge
      q_{2}.
\end{align}
The last term of \eqref{eq:2CH1-1} implies \( q_{1} \in W^{0,1} \);
similarly, the first term of \eqref{eq:2CH1-2} gives
\( q_{2} \in W^{1,0} \).
By the total symmetry of Remark~\ref{rem:total-symmetry-pq}, we see that
\( q_{1} = 0 = q_{2} \) implies \( \vq = 0 \); then
\( \vp = J\vq = 0 \) too, which is a contradiction
with~\eqref{eq:T-PQ}.
Thus, without loss of generality, we may assume that \( q_{1} \) is
non-zero in \( W^{0,1} \).
The first part of \eqref{eq:2CH1-1}, then implies \( r_{1} = -2 \).
Now considering equation \eqref{eq:2CH1-plus}, the first coefficient
is non-zero, so \( q_{+} \in W^{1,0} \); then the second term forces
\( r_{2} = -4 \); so the holomorphic sectional curvatures of the two
factors are \( -1/2 \) and \( -1 \).
Equation~\eqref{eq:2CH1-2} now implies \( q_{2} = 0 \).

Let us now consider the equations~\eqref{eq:T-PQ}, \eqref{eq:W-PQ},
\eqref{eq:dP} and~\eqref{eq:dQ} Firstly~\eqref{eq:T-PQ} reduces to
\begin{equation*}
  p_{1}\wedge p_{+} + q_{1}\wedge q_{+} = -a_{1}\wedge
  a_{2}-b_{1}\wedge b_{2}.
\end{equation*}
Equation~\eqref{eq:W-PQ} gives three relations
\begin{gather}
  \label{eq:Wpq-diag}
  p_{+}\wedge q_{+} = -a_{1}\wedge b_{1},\quad
  p_{1}\wedge q_{1} = -a_{2}\wedge b_{2},\\
  \label{eq:Wpq-offdiag}
  p_{1}\wedge q_{+}-q_{1}\wedge p_{+} = - a_{1}\wedge
  b_{2}-a_{2}\wedge b_{1}.
\end{gather}
As \( q_{+} \) and \( p_{+} = Jq_{+} \) are span \( T^{*}S_{1} \), we
may use Remark~\ref{rem:rotate} to take \( q_{+} = ra_{1} \) with \( r>0 \).
It follows that \( p_{+} = rb_{1}\) and \eqref{eq:Wpq-diag} implies
\( r=1 \).
Total symmetry now gives \( q_{1} = a_{2} \), \( p_{1} = b_{2} \).
Thus
\begin{equation*}
  \vq =
  \begin{pmatrix}
    a_{2}&a_{1}\\
    a_{1}&0
  \end{pmatrix}
  \quad
  \vp =
  \begin{pmatrix}
    b_{2}&b_{1}\\
    b_{1}&0
  \end{pmatrix}
  .
\end{equation*}
It remains to consider equations \eqref{eq:dP} and~\eqref{eq:dQ}.
We have \( db_{1} = \sqrt{2}a_{1}\wedge b_{1} \),
\( db_{2} = 2a_{2}\wedge b_{2} \), and putting this into \eqref{eq:dP}
and~\eqref{eq:dQ} gives \( 2\kappa = -\sqrt{2}b_{1}-2b_{2} \), so we
indeed have \( d\kappa = \omega_{S} \).

\section{Twists and the quaternionic Kähler metric}

Given a projective special Kähler manifold \( S \), with special
Kähler cone~\( C \) carrying the conic isometry \( X \), the c-map
corresponds to the twist of the cotangent bundle \( H \) of~\( C \).
The lift of the conic isometry leads to a rotating symmetry on \( H \)
which generates the Abelian action needed to twist.

According to the general theory developed in~\cite{Macia-S:c-map}, the
pseudo-hyperKähler metric on \( H = T^{*}C \) is given in terms of the
coframes as
\begin{equation*}
  \begin{split}
    g_{H}
    = \hat{\va}^{T}\hat{\va} + \hat{\vb}^{T}\hat{\vb}
    -\hat{\varphi}^{2}-\hat{\psi}^{2} +\hat{\vA}^{T}\hat{\vA}
    +\hat{\vB}^{T}\hat{\vB} -\hat{\Phi}^{2} - \hat{\Psi}^{2},
  \end{split}
\end{equation*}
where
\( \hat{\gamma} = (\hat{\va},\hat{\vb},\hat{\varphi},\hat{\psi}) \) is
the coframe of~\( C \) given in~\S\ref{sec:sK-cond} and
\( \hat{\delta} = (\hat{\vA},\hat{\vB},\hat{\Phi},\hat{\Psi}) \) is a
corresponding coframe on each \( T_{x}^{*}C \) satisfying
\( d\hat{\delta} = - \hat{\delta}\wedge\omega_{\nabla} \).
The triple of Kähler two-forms is then given by
\begin{align*}
  \omega_{I}
  &= \hat{\va}^{T}\wedge \hat{\vb} -\hat{\varphi}\wedge\hat{\psi}
    + \hat{\vA}^{T}\wedge\hat{\vB} - \hat{\Phi}\wedge\hat{\Psi},\\
  \omega_{J}
  &= \hat{\vA}^{T}\wedge \hat{\va} + \hat{\vB}^{T}\wedge \hat{\vb}
    + \hat{\Phi}\wedge\hat{\varphi} + \hat{\Psi}\wedge\hat{\psi},\\
  \omega_{K}
  &= \hat{\vA}^{T}\wedge \hat{\vb} - \hat{\vB}^{T}\wedge\hat{\va}
    + \hat{\Phi}\wedge \hat{\psi} - \hat{\Psi}\wedge\hat{\varphi}.
\end{align*}
The conic symmetry~\( X \) lifts to a vector field~\( \tilde{X} \)
on~\( H \) preserving \( g_{H} \) and \( \omega_{I} \), but with
\( L_{\tilde{X}}\omega_{J} = \omega_{K} \).

To obtain a quaternionic Kähler metric one first considers the
positive definite metric on~\( H \) given by
\begin{equation*}
  g_{N} = \frac{2}{t^{2}}\bigl(\hat{\va}^{T}\hat{\va}
  + \hat{\vb}^{T}\hat{\vb} + \hat{\varphi}^{2} + \hat{\psi}^{2}
  + \hat{\vA}^{T}\hat{\vA} + \hat{\vB}^{T}\hat{\vB}
  + \hat{\Phi}^{2} + \hat{\Psi}^{2}\bigr),
\end{equation*}
this called an ``elementary deformation'' of \( g_{H} \)
in~\cite{Macia-S:c-map}.
Now one builds a principal circle bundle~\( P\to H \) with principal
generator~\( Y \) and curvature
\begin{equation*}
  F = - \hat{\va}^{T}\wedge\hat{\vb} + \hat{\varphi}\wedge\hat{\psi}
  - \hat{\vA}^{T}\wedge\hat{\vB} + \hat{\Phi}\wedge\hat{\Psi},
\end{equation*}
and constructs the twist \( Q \) of \( H \) as
\( Q = P/(\tilde{X} - t^{2}Y/2) \), the factor \( -t^{2}/2 \) being
the twist function satisfying the condition
\( d(-t^{2}/2) = \tilde{X}\hook F \).
Tensors on~\( H \) invariant under~\( \tilde{X} \) may now be
transferred to~\( Q \), and the exterior differential on~\( Q \)
corresponds to the operation \( d_{Q} \) on~\( H \) given
\begin{equation*}
  d_{Q}\beta = d\beta + \frac{2}{t^{2}}F\wedge (\tilde{X}\hook\beta)
\end{equation*}
for invariant forms~\( \beta \).
The general result of \cite{Macia-S:c-map} is that metric on~\( Q \)
induced by~\( g_{N} \) is quaternionic Kähler.

Since the coframe
\( \gamma = (\tilde{\va},\tilde{\vb},\varphi,\tilde{\psi}) \) is
\( X \)-invariant, the twisted differentials are directly computed
leading to
\begin{gather*}
  d_{Q}\tilde{\va} = d\tilde{\va},\quad d_{Q}\tilde{\vb} =
  d\tilde{\vb},\quad
  d_{Q}\tilde{\psi} = 0,\\
  d_{Q}\varphi = d\varphi + \frac{2}{t^{2}}F =
  2(\varphi\wedge\tilde{\psi} - \tilde{\va}^{T}\wedge\tilde{\vb} +
  \tilde{\Phi}\wedge\tilde{\psi}).
\end{gather*}
In the case that \( S \) is a Lie group with \( \va \), \( \vb \), and
\( d\varphi = 2\omega_{S} \) left-invariant, these results are
constant coefficient with respect to~\( \gamma \).
The coframe
\( \tilde{\delta} = \hat{\delta}/t =
(\tilde{\vA}^{T},\tilde{\vB}^{T},\tilde{\Phi},\tilde{\Psi}) \) is not
\( \tilde{X} \)-invariant, but
\begin{equation*}
  L_{\tilde{X}}\tilde{\delta} = -\tilde{\delta}\ii.
\end{equation*}
When the Kähler form is exact \( \omega_{S}=d\kappa \) with
\( \kappa \) left-invariant and \( S \) is simply-connected, we may
choose a principle parameter~\( \tau \) for~\( C_{0} \) as
in~\S\ref{sec:rotations} and define
\( \delta = \tilde{\delta}\exp(\ii\tau) \).
Then \( X\tau=1 \) implies
\begin{equation*}
  L_{\tilde{X}}\delta = 0.
\end{equation*}
We may now apply the twisted differential~\( d_{Q} \) to the coframe
\( \delta \) to obtain
\begin{equation*}
  \begin{split}
    d_{Q}\delta
    &= d\Bigl(\frac{1}{t}\hat{\delta} \exp(\ii \tau)\Bigr)\\
    &= -\frac{1}{t^{2}}\hat{\psi}\wedge\hat{\delta}exp(\ii\tau) +
      \frac{1}{t}\hat{\delta}\wedge\omega_{\nabla}\exp(\ii\tau) -
      \frac{1}{t}\hat{delta}\wedge \exp(\ii\tau)d\tau\\
    &= - \tilde{\psi}\wedge \delta
      + \delta\wedge \exp(\ii\tau) \omega_{\nabla} \exp(\ii\tau)
      - \delta\wedge\ii d\tau\\
    &= - \tilde{\psi} \wedge\delta
      + \delta\wedge \bigl(\omega_{\LC} + \eta\exp(2\ii\tau)\bigr)
      - \delta\wedge \ii\Bigl(\varphi+\frac{1}{2}\kappa\Bigr).
  \end{split}
\end{equation*}
In the previous expression all non-\( \delta \) terms are constant
coefficient except possibly \( \eta \exp(2\ii \tau) \).
But in the notation of~\S\ref{sec:rotations}, using
Lemma~\ref{lem:eta} we have
\begin{equation*}
  \eta \exp(2\ii \tau) =
  \begin{pmatrix}
    \vP&\vQ&0&0\\
    \vQ&-\vP&0&0\\
    0&0&0&0\\
    0&0&0&0
  \end{pmatrix}
\end{equation*}
which is constant coefficient if and only if \( \vP = \pi^{*}p \) and
\( \vQ = \pi^{*}q \) with \( p,q \) left-invariant.  Therefore,

\begin{theorem}
  The twist of \( T^{*}C \) where \( C \) is the special Kähler cone
  of an invariant projective special Kähler structure on \( S \) of
  dimension~\( 2n \) is a homogeneous quaternionic Kähler manifold of
  dimension \( 4n+4 \).  \qed
\end{theorem}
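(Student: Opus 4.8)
The plan is to invoke the general theorem of~\cite{Macia-S:c-map}, which already guarantees that the twist~\( Q \) of \( T^{*}C \) carries a quaternionic Kähler metric induced by~\( g_{N} \); since \( \dim_{\bR}C = 2n+2 \) and the twist construction preserves dimension, \( Q \) has dimension \( 4n+4 \). Thus the only thing left to establish is \emph{homogeneity}. My strategy is to exhibit a global coframe on~\( Q \) whose structure equations have constant coefficients. The standard converse of the Maurer–Cartan equations — the Jacobi identity for the resulting structure constants being automatic from \( d_{Q}^{2}=0 \) — then realises the (universal cover of the) simply-connected~\( Q \) as a Lie group on which this coframe is left-invariant, and hence makes every tensor that is expressed in the coframe with constant coefficients left-invariant as well.

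The candidate coframe is \( (\gamma,\delta)=(\tilde{\va},\tilde{\vb},\varphi,\tilde{\psi},\delta) \) with \( \delta=\tilde{\delta}\exp(\ii\tau) \). First I would confirm that this coframe is \( \tilde{X} \)-invariant, so that it descends to~\( Q \): the base part~\( \gamma \) is \( X \)-invariant by construction, while the rotation \( \exp(\ii\tau) \) is inserted precisely to correct the failure \( L_{\tilde{X}}\tilde{\delta}=-\tilde{\delta}\ii \), yielding \( L_{\tilde{X}}\delta=0 \). Next I would check that \( g_{N} \) becomes constant-coefficient in \( (\gamma,\delta) \): using \( \hat{\va}=t\tilde{\va} \), \( \hat{\varphi}=t\varphi \), \( \hat{\psi}=t\tilde{\psi} \), \( \hat{\delta}=t\tilde{\delta} \), the prefactor \( 2/t^{2} \) absorbs all powers of~\( t \), and because \( \exp(\ii\tau) \) is orthogonal the fibre quadratic form in \( \tilde{\delta} \) coincides with that in~\( \delta \), giving \( g_{N}=2\bigl(\tilde{\va}^{T}\tilde{\va}+\tilde{\vb}^{T}\tilde{\vb}+\varphi^{2}+\tilde{\psi}^{2}+\delta^{T}\delta\bigr) \). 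The same orthogonality shows that \( \omega_{I},\omega_{J},\omega_{K} \) have constant coefficients in \( (\gamma,\delta) \).

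The core computation is to apply \( d_{Q}\beta=d\beta+\tfrac{2}{t^{2}}F\wedge(\tilde{X}\hook\beta) \) to each coframe member and verify constant coefficients. For \( \tilde{\va},\tilde{\vb},\tilde{\psi} \) this is immediate from the left-invariance of \( \va,\vb \). For~\( \varphi \) the twisted derivative \( d_{Q}\varphi=d\varphi+\tfrac{2}{t^{2}}F \) picks up fibre contributions from~\( F \); although these are written in~\( \tilde{\delta} \), they enter only through the \( \ii \)-invariant quadratic combinations \( \tilde{\vA}^{T}\wedge\tilde{\vB} \) and \( \tilde{\Phi}\wedge\tilde{\Psi} \) (the fibre part of the Kähler form), which are unchanged by the rotation and so are constant-coefficient in~\( \delta \). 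For~\( \delta \) itself I would use \( d\hat{\delta}=-\hat{\delta}\wedge\omega_{\nabla} \); conjugation by \( \exp(\ii\tau) \) leaves the unitary part~\( \omega_{\LC} \), which commutes with~\( \ii \), unchanged, producing \( \delta\wedge(\omega_{\LC}+\eta\exp(2\ii\tau)) \) together with a term \( \delta\wedge\ii\beta_{0} \) in which \( \beta_{0} \) is a fixed combination of \( \varphi \) and~\( \kappa \). All non-\( \delta \) factors here are constant-coefficient, since \( \mu,\lambda,\kappa \) are left-invariant.

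The one term that is not automatically constant-coefficient, and therefore the main obstacle, is \( \eta\exp(2\ii\tau) \). By Lemma~\ref{lem:eta} its only nonzero blocks are built from the rotated forms \( \vP \) and~\( \vQ \), and these are constant-coefficient precisely when \( \vP=\pi^{*}\vp \) and \( \vQ=\pi^{*}\vq \) with \( \vp,\vq \) left-invariant — which is exactly the definition of the projective special Kähler structure on~\( S \) being homogeneous. Under this hypothesis every structure equation for \( (\gamma,\delta) \) has constant coefficients, so \( Q \) is a Lie group with \( (\gamma,\delta) \) a left-invariant coframe; consequently \( g_{N} \), and with it the quaternionic Kähler structure of~\cite{Macia-S:c-map}, is left-invariant, completing the proof.
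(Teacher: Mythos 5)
Your proposal is correct and takes essentially the same route as the paper: the same coframe \( (\gamma,\delta) \) with \( \delta = \tilde{\delta}\exp(\ii\tau) \), the same computation of the twisted differentials, and the same identification, via Lemma~\ref{lem:eta}, of \( \eta\exp(2\ii\tau) \) as the one term whose constancy is exactly the homogeneity hypothesis on \( \vP,\vQ \); you additionally make explicit the converse Maurer--Cartan step and the dimension count, which the paper leaves implicit. One side claim should be corrected: \( \omega_{J} \) and \( \omega_{K} \) are \emph{not} constant-coefficient in \( (\gamma,\delta) \) — they are not even \( \tilde{X} \)-invariant, since \( L_{\tilde{X}}\omega_{J} = \omega_{K} \), so they do not descend to the twist individually; only \( \omega_{I} \) and the rank-3 bundle they span (framed, e.g., by the \( \exp(\ii\tau) \)-rotated combinations of \( \omega_{J},\omega_{K} \)) are invariant. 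This does not damage your argument, since homogeneity follows from the left-invariance of \( g_{N} \) together with the general quaternionic Kähler result of the twist construction.
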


For the examples we have provided in the paper, the resulting
quaternionic Kähler manifolds are already known, see for example
\cite[Table~2, p.~499]{DeWit-VvP:symmetry}, and the solvable group is
a subgroup of larger isometry group.
The examples may be checked in the same way
in~\cite[\S6]{Macia-S:c-map}, where we verified that the two
structures on \( \CH(1) \), Example~\ref{ex:CH1}, yield
\( \Un(2,2)/(\Un(2)\times \Un(2)) \) and \( G_{2}^{*}/\SO(4) \).
Similarly, the flat cone of Remark~\ref{rem:flat} gives
\( \Un(n,2)/(\Un(n)\times \Un(2)) \); the product
\( \CH(1)^{2} \),~\S\ref{sec:4d}, yields
\( \SO(3,4)/(\SO(3)\times \SO(4)) \) and \( \CH(1)^{3} \),
Proposition~\ref{prop:three}, produces \( \SO(4,4)/(\SO(4)\times \SO(4)) \).

\section*{Acknowledgements}
OM would like to thank the hospitality of the Centre for Quantum
Geometry of Moduli Spaces, and Aarhus University, where part of this
work was developed.
His work was partially supported by the MINECO-FEDER project
MTM2016-77093-P.
AFS was partially supported by the Danish Council for Independent
Research~\textbar~Natural Sciences project DFF - 6108-00358.

\providecommand{\bysame}{\leavevmode\hbox to3em{\hrulefill}\thinspace}
\providecommand{\MR}{\relax\ifhmode\unskip\space\fi MR }
\providecommand{\MRhref}[2]{%
  \href{http://www.ams.org/mathscinet-getitem?mr=#1}{#2}
}
\providecommand{\href}[2]{#2}

{\parindent=0pt\small

\textsc{Oscar Macia}, Departamento de Matematicas, Facultad de
Ciencias Matematicas, Universidad de Valencia, C.~Dr Moliner, 50,
46100 Burjassot, Valencia, Spain. E-mail: \url{oscar.macia@uv.es}

\smallskip \textsc{Andrew Swann}, Department of Mathematics, Centre
for Quantum Geometry of Moduli Spaces, and Aarhus University Centre
for Digitalisation, Big Data and Data Analytics, Aarhus University, Ny
Munkegade 118, Bldg 1530, 8000 Aarhus, Denmark. E-mail:
\url{swann@math.au.dk}
\par}

\end{document}